\newlength{\dhatheight}
\renewcommand{\epsilon}{\varepsilon}
\renewcommand{\rho}{\varrho}
\renewcommand{\phi}{\varphi}
\newcommand{\bM}{\mathbb{M}}
\newcommand{\uM}{\underline{M}}
\newcommand{\Z}{\mathbb Z}
\newcommand{\CC}{\mathcal{C}}
\newcommand{\bF}{\mathbb{F}}
\newcommand{\bR}{\mathbb{R}}
\newcommand{\bFp}{\bF_p}
\newcommand{\uFp}{\underline{\bFp}}
\newcommand{\uZ}{\underline{\mathbb{Z}}}
\newcommand{\Id}{\operatorname{Id}}
\newcommand{\Burn}{\sf{Burn}}
\newcommand{\Sp}{\sf{Sp}}
\newcommand{\res}{\operatorname{res}}
\newcommand{\Rep}{\operatorname{Rep}}
\newcommand{\Conf}{\operatorname{Conf}}
\newcommand{\bs}{\bigstar}
\newcommand{\bsc}{\bs_\xi}
\newcommand{\tH}{\tilde{H}}
\newcommand{\tHbs}{\tilde{H}^\bigstar_G}
\newenvironment{myeq}[1][]
{\stepcounter{theorem}\begin{equation}\tag{\thetheorem}{#1}}
{\end{equation}}
     \theoremstyle{plain}
\newtheorem{theorem}{Theorem}[section]
\newtheorem{thm}[theorem]{Theorem}
\newtheorem{lemma}[theorem]{Lemma}
\newtheorem{cor}[theorem]{Corollary}
\newtheorem{prop}[theorem]{Proposition}
\newtheorem{notation}[theorem]{Notation}
\newtheorem{example}[theorem]{Example}
\theoremstyle{definition}
\newtheorem{defn}[theorem]{Definition}
\theoremstyle{remark}
\newtheorem{remark}[theorem]{Remark}
\begin{document}

\title{Revisiting the Nandakumar-Ramana Rao Conjecture}
\author{Surojit Ghosh}
\address{Department of Mathematics, Indian Institute of Technology, Roorkee, Uttarakhand-247667, India}
\email{surojit.ghosh@ma.iitr.ac.in; surojitghosh89@gmail.com}

\author{Ankit Kumar}
\address{Department of Mathematics, Indian Institute of Technology, Roorkee, Uttarakhand-247667, India}
\email{ankit\_k@ma.iitr.ac.in}
\date{\today}
\subjclass{Primary 55N91, 52A35 secondary 55P91, 55N25}
\keywords{Equivariant maps, Measure partitions, $RO(G)$-graded Bredon cohomology}

\begin{abstract}
We reprove the generalized Nandakumar-Ramana Rao conjecture for the prime case using representation ring-graded Bredon cohomology. Our approach relies solely on the $RO(C_p)$-graded cohomology of configuration spaces,  viewed as a module over the $RO(C_p)$-graded Bredon cohomology of a point.
\end{abstract}
\maketitle

\section{Introduction}
Partitioning convex bodies in Euclidean spaces into convex pieces of equal measure is a long-standing problem in combinatorial geometry. This area of research originated with the celebrated \emph{Ham-Sandwich Theorem}, which states that in \(\mathbb{R}^m\), it is possible to bisect \(m\) masses using a single hyperplane.

In 2006, Nandakumar and Ramana Rao proposed a geometric conjecture, asking whether, \emph{for any convex shape in the plane and any positive integer \(n\), there exists a partition of the shape into \(n\) convex pieces such that all pieces have equal area and equal perimeter}. They provided a positive answer to the conjecture for \(n = 2^k\) in \cite{Nan12}. Later, Bárány, Blagojević, and Szűcs verified the case for \(n = 3\) in \cite{Bar10}. It is claimed in \cite{AAK18} that the conjecture for general \(n \geq 2\) has been resolved. Notably, all these advances have been made for convex bodies in \(\mathbb{R}^2\).

A generalized version of the Nandakumar-Ramana Rao conjecture can be formulated as follows:

\begin{quote}
    Let \(K\) be a \(d\)-dimensional convex body in \(\mathbb{R}^d\), let \(\mu\) be an absolutely continuous measure on \(\mathbb{R}^d\), let \(n \geq 2\) be a natural number, and let \(\phi_1, \ldots, \phi_{d-1}\) be \(d-1\) continuous functions on the metric space of \(d\)-dimensional convex bodies in \(\mathbb{R}^d\). Then, there exists a partition of \(\mathbb{R}^d\) into \(n\) convex pieces \(P_1, \ldots, P_n\) such that  
    \[
    \mu(P_1 \cap K) = \cdots = \mu(P_n \cap K)
    \]
    and 
    \[
    \phi_i(P_1 \cap K) = \cdots = \phi_i(P_n \cap K)
    \]
    for every \(1 \leq i \leq d-1\).
\end{quote}

This problem can be reduced to determining the non-existence of a \(\Sigma_n\)-equivariant map from \(\Conf_{n}(\mathbb{R}^d)\) to a certain sphere, where \(\Conf_{n}(\mathbb{R}^d)\) denotes the configuration space of \(n\) distinct labeled points in \(\mathbb{R}^d\).

The generalized conjecture for \(n\), where \(n\) is a prime power, was established for any dimension by Blagojević and Ziegler in \cite{BZ14} using equivariant obstruction theory. Independently, the same result was obtained in \cite{KHA14} based on a cell decomposition of the one-point compactification of the configuration space.
In \cite{BLZ}, the authors reprove the generalized version for arbitrary dimensions and prime power \(n\) using the Fadell-Husseini index of the configuration space.

In this paper, we present an alternative proof of the generalized Nandakumar-Ramana Rao conjecture for the case where the number of parts \( n \) is a prime \( p \). Our approach establishes the non-existence of a \( C_p \)-equivariant map from the configuration space \( \Conf_{p}(\mathbb{R}^d) \) to a certain sphere in the \( C_p \)-representation. This method differs significantly from those previously described in the literature.

Specifically, we employ the module structure of the \( RO(C_p) \)-graded Bredon cohomology of configuration spaces, viewed as a module over the \( RO(C_p) \)-graded cohomology of a point. 

\begin{notation}
   \begin{enumerate}
       \item  The representation ring $RO(C_p)$ is generated by the trivial representation $1$, and the $2$-dimension representations $\xi^k$ given by the rotation by the angle $\frac{2\pi k}{p}$ for $k=1, \cdots, \frac{p-1}{2}$. 
\item We use \( \bs \) to denote the \( RO(G) \)-grading, and \( * \) for the usual integer grading.

    \item For \( G = C_p \), with \( p \) an odd prime, we use \( \bsc \) to denote the \( m + n\xi \) grading, where \( m, n \in \mathbb{Z} \).
\end{enumerate}
\end{notation}

\paragraph{\textbf{Acknowledgement.}} We would like to thank the referee for their detailed and pertinent comments, which have helped improve the clarity and presentation of this manuscript. The research of the second author (AK) is supported by the University Grants Commission, India.

\section{Preliminaries on Bredon cohomology}

Ordinary cohomology theories are defined for abelian groups and are represented by spectra with homotopy concentrated in degree 0. In the equivariant setting, the analogous role is played by Mackey functors. In this section, we briefly recall their definition and relate them to equivariant cohomology (see \cite{May96} for details).

The \emph{Burnside category} $\Burn_G$ is the category whose objects are finite $G$-sets. For objects $S$ and $T$, the morphism set is given by the group completion of the hom-set of spans between $S$ and $T$ in the category of finite $G$-sets.  

\begin{defn}
A \emph{Mackey functor} is a functor $\uM: \Burn_G^{op} \to \mathsf{Ab}$ from the opposite of the Burnside category to the category of abelian groups.
\end{defn}

In this article, we fix $G = C_p$, the cyclic group of prime order $p$. 

\begin{example}
    For a $G$-module $M$, we define a Mackey functor $\uM$ by the formula $\uM(G/H) = M^H$. In particular, we consider the Mackey functors $\uZ$ and $\uFp$, corresponding to the trivial $G$-modules $\Z$ and $\bFp$, respectively.
\end{example}

For a real $G$-representation $V$ equipped with a $G$-invariant inner product, we define 
\[
S(V) := \{ v \in V \mid \langle v, v \rangle = 1 \},
\]
the unit sphere in $V$, and $S^V$, the one-point compactification of $V$. In equivariant homotopy theory, the $V$-fold suspension map 
\[
X \mapsto S^V \wedge X
\]
is invertible. We refer to \cite{MM02} for constructing the equivariant stable homotopy category, denoted by $\Sp^G$, which is the homotopy category of $G$-spectra. 

Since the $V$-fold suspension map is invertible, one can define $S^\alpha$ for $\alpha \in RO(G)$, the Grothendieck group completion of the monoid of irreducible representations of $G$. This construction induces an $RO(G)$-grading on homotopy groups. Specifically, for a based $G$-space $X$, the ordinary Bredon cohomology with coefficients in the Mackey functor $\uM$ at grading $\alpha \in RO(G)$ is given by:  
\[
\tilde{H}^\alpha_G(X; \uM) := {\Sp}^G(X, S^\alpha \wedge H\uM),
\]
here $H\uM$ denotes the  Eilenberg--Mac Lane spectrum (cf. \cite{HHR16}) for the  Mackey functor $\uM$ defined as  
\[
\underline{\pi_n}(H\uM) = 
\begin{cases} 
\uM, & n = 0, \\
0, & n \neq 0.
\end{cases}
\]

\begin{remark}\label{cpgrad}
   For $G=C_p$, one observes that it suffices to consider the grading of the form $m+n \xi$ with $m,n \in \Z$ as $S^\alpha \wedge H\uFp \simeq H\uFp$ whenever both the dimensions of $\alpha$ and $\alpha^{C_p}$ are zero (See \cite[Appendix B]{FL04}). 
\end{remark}

Let $V$ be a $G$-representation. We denote by $a_V$ the inclusion of the fixed points,  
\[
a_V: S^0 \to S^V.
\]
For a ring spectrum $X$ with a $G$-action, we abuse notation and also denote by $a_V$ its image under the map $S^0 \to X$.

If the representation $V$ contains the trivial representation as a summand, then $a_V = 0$.  
Moreover, for any two $G$-representations $V$ and $W$, we have the relation  
\[
a_V a_W = a_{V \oplus W}.
\]

\noindent See \cite[Definition 3.11]{HHR16} for further details.

For an orientable $G$-representation $V: G \to SO(V)$, a choice of orientation induces an isomorphism  
\[
\tilde{H}^{\dim(V)}_G(S^V; \uZ) \cong \Z.
\]
The Thom space of the equivariant bundle $V \to G/G$ is $S^V$. In particular, the restriction map  
\[
\tilde{H}^{\dim(V)}_G(S^V; \uZ) \to \tilde{H}^{\dim(V)}_e(S^{\dim(V)}; \Z)
\]
is an isomorphism.

Utilizing the above isomorphism, for an orientable $G$-representation $V$, we define the orientation class $u_V$ as  
\[
u_V \in \tilde{H}^{V-\dim(V)}_G(S^0 ; \uZ),
\]
the generator that maps to $1$ under the restriction isomorphism. The orientation class satisfies the relations:  
\[
u_{V \oplus 1} = u_V, \quad u_V \cdot u_W = u_{V \oplus W}.
\]
\noindent See \cite[Definition 3.12]{HHR16} for more details.
  
We use the same notations for the image of the classes $a_V$ and $u_V$ in $\tilde{H}^{\bs}_G(S^0; \uFp)$ under the spectrum map $H \uZ \to H\uFp.$  Whenever needed, we shall write \( \mathbb{M}_p \) to refer to the restricted \( RO(C_p) \)-graded cohomology ring \( \tilde{H}^{\bsc}_{C_p}(S^0; \uFp) \).

\section{Some $\mathbb{M}_{p}$-modules}
In this section, we describe the $\bM_p$-module structure on the $RO(C_p)$-graded Bredon cohomology of the universal space $EC_p$ and the configuration space $\Conf_p(\bR^d)$. Observe that the collapsing map $X_+ \to S^0$ induces a $\bM_p$-module structure on $\tilde{H}^\bs_{C_p}(X_+; \uFp)$. For any free $C_p$-space $X$, the Bredon cohomology of $X$ can be analyzed using a spectral sequence, which in the case of $X = EC_p$, corresponds to the homotopy fixed point spectral sequence.

We start with a $C_p$-CW structure on $X$ as $X=\cup_s X^{(s)}$. If $X$ is free, the cells are of the type $G/e \times \mathbb{D}^k$. We also note that $X/G$ has a CW complex structure with associated filtration $X^{(s)}/G$. Thus, we have 
$$X^{(s)}/ X^{(s-1)}\cong \bigvee_{e \in I(s)}G_+ \wedge S^s.$$

Hence following \cite{BG21}, we have an $RO(C_p)$-graded spectral sequence as follows:

\begin{prop} \label{HFPSS} There is a spectral sequence 
$$E^{s,t}_2(\alpha) = H^s(X/G; {\pi}_t(S^{-\dim(\alpha)}\wedge H\bFp )) \Rightarrow H^{s-t-\alpha}_G(X, \uFp). $$
with boundary $d_r : E_r^{s,t}(\alpha) \to E_r^{s+r,t-r+1}(\alpha).$ The spectral sequences assemble together for various $\alpha$ into a multiplicative $RO(C_p)$-graded spectral sequence 
$$E^{s,\alpha}_2 = H^s(X/G; \pi_0 (S^{-\dim(\alpha)}\wedge H\bFp)) \Rightarrow  H^{s-\alpha}_G(X, \uFp)$$ 
where $s\in \Z$ and $\alpha \in RO(G)$. 
\end{prop}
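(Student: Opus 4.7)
The plan is to realise this as the $RO(C_p)$-graded equivariant Atiyah--Hirzebruch spectral sequence associated with the $C_p$-CW filtration $\{X^{(s)}\}$. I would start from the cofiber sequences $X^{(s-1)} \hookrightarrow X^{(s)} \to X^{(s)}/X^{(s-1)}$, turn each of them into a long exact sequence in $\tilde{H}^{\bullet}_{C_p}(-;\uFp)$, and splice these together into an exact couple indexed by the filtration degree $s$. Using freeness of the action, together with the given identification $X^{(s)}/X^{(s-1)} \simeq \bigvee_{I(s)} G_+ \wedge S^s$ and the free-orbit adjunction $[G_+ \wedge Y, E]_G \cong [Y, E]$ applied to $E = S^\alpha \wedge H\uFp$, each $E_1$ entry reduces to a non-equivariant computation of the form $\tilde{H}^{\bullet}(S^s;\bFp)$. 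Summing over $I(s)$ and bookkeeping the auxiliary grading $t$ exhibits the $E_1$-term as the cellular cochain complex of $X/G$ with coefficients in $\pi_t(S^{-\dim(\alpha)} \wedge H\bFp)$.

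The next step is to identify $d_1$ with the cellular coboundary on $X/G$. The cleanest approach is to compare our exact couple with the one arising from the skeletal filtration of the orbit space $X/G$: because the $C_p$-action on $X$ is free, the quotient map carries the $C_p$-CW-structure on $X$ to the usual CW-structure on $X/G$ cell-for-cell, and under the free-orbit adjunction the two exact couples become isomorphic. Hence $d_1$ must coincide with the ordinary cellular coboundary, yielding the stated form of $E_2^{s,t}(\alpha) = H^s(X/G;\pi_t(S^{-\dim(\alpha)} \wedge H\bFp))$. Convergence to $\tilde{H}^{s-t-\alpha}_{C_p}(X;\uFp)$ is then standard once one notes that the filtration is exhaustive and bounded below; the $RO(C_p)$-assembly across varying $\alpha$ is automatic because the filtration on $X$ is chosen independently of $\alpha$ and the suspension functors $S^V \wedge -$ respect cofiber sequences.

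The main obstacle I anticipate is the multiplicative structure. For this I would choose an equivariant cellular diagonal approximation $\Delta\colon X \to X \times X$ carrying $X^{(s)}$ into $\bigcup_{i+j=s} X^{(i)} \times X^{(j)}$ and pair it with the ring map $H\uFp \wedge H\uFp \to H\uFp$; together these induce a pairing of exact couples, hence a pairing of spectral sequences, converging to the cup product on $\tilde{H}^{\bs}_{C_p}(X;\uFp)$. The delicate checks are that such an equivariant cellular diagonal exists compatibly with the free $C_p$-structure, that the induced pairing on $E_2$ matches the usual cup product on $H^\bullet(X/G;-)$, and that the resulting multiplicativity is natural across all $\alpha \in RO(C_p)$; everything else is a routine $RO(C_p)$-graded adaptation of the classical Atiyah--Hirzebruch construction.
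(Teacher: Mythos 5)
Your construction---the skeletal filtration of the $C_p$-CW structure, the free-orbit adjunction $[G_+\wedge Y,E]_G\cong[Y,E]$ to reduce the $E_1$-page to the cellular cochains of $X/G$, and a cellular diagonal for multiplicativity---is exactly the cellular/Atiyah--Hirzebruch argument the paper relies on, which it delegates to the reference [BG21] rather than writing out. The proposal is correct and follows essentially the same route.
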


The multiplicative structure on the spectral sequence in Proposition \ref{HFPSS} implies that everything is a product of the form $H^s(X/G) \otimes \pi_0 (S^{-\dim{\alpha}} \wedge H\bFp)$. Now $\pi_0 (S^{-\dim\alpha} \wedge H\bFp)$ fits together as a ring $\bigotimes\limits_{\eta \in \widehat{G}} \bFp[u_\eta^\pm]$, where $\widehat{G}$ denotes the set of characters of $G$. Thus we obtain 

\begin{cor}{\label{uni}}
    The Bredon cohomology of $E{C_p}$ is given by 
 
  \[
\tilde{H}^{\bs}_{C_p}({EC_p}_+; \uFp) \cong \frac{\bFp[x, y, u_\eta^{\pm} \colon \eta \in \widehat{G}\setminus \{1\}]}{(x^2)},
\] 
with $|x|=1$ and $|y|=2$. In particular, \[
\tilde{H}^{\bsc}_{C_p}({EC_p}_+; \uFp) \cong \frac{\bFp[x, y, u_\xi^{\pm}]}{(x^2)}.
\]
\end{cor}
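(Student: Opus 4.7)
The plan is to apply the multiplicative $RO(C_p)$-graded spectral sequence of Proposition~\ref{HFPSS} to the free contractible $C_p$-space $X = EC_p$, whose orbit space is $BC_p$.

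\textbf{Identification of the $E_2$-page.} Assembling the per-$\alpha$ spectral sequences yields
\[
E_2^{s,\bs} \;\cong\; H^s(BC_p;\bFp) \otimes_{\bFp} \bigotimes_{\eta \in \widehat{G}\setminus\{1\}} \bFp[u_\eta^{\pm}].
\]
For odd $p$, the ordinary mod-$p$ cohomology is $H^*(BC_p;\bFp) \cong \bFp[y]\otimes\Lambda(x) \cong \bFp[x,y]/(x^2)$ with $|x|=1$ and $|y|=2$, so as a ring $E_2$ is already isomorphic to the target answer.

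\textbf{Collapse.} For each fixed $\alpha \in RO(C_p)$ the coefficient $\pi_t(S^{-\dim\alpha}\wedge H\bFp)$ is concentrated in the single degree $t=-\dim\alpha$, so the individual spectral sequence $E^{s,t}_2(\alpha)$ collapses at $E_2$ for trivial reasons. Multiplicativity of the assembled spectral sequence then forces the classes $x$, $y$, $u_\eta^{\pm}$ to be permanent cycles, and the assembled multiplicative spectral sequence collapses at $E_2$. In particular $E_\infty \cong E_2$ as an $RO(C_p)$-graded ring.

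\textbf{Extensions and the $\bsc$-restriction.} One then checks that there are no hidden multiplicative extensions: the relation $x^2=0$ is already present on $E_\infty$, and the remaining generators $y$, $u_\eta^{\pm}$ sit in distinct total $RO(C_p)$-gradings with no admissible filtration jump. In particular $u_\eta \cdot u_\eta^{-1}=1$ lifts to the abutment, so $u_\eta$ is a genuine unit. This gives the first isomorphism. The second follows by restricting to gradings of the form $m+n\xi$, which discards all $u_{\xi^k}^{\pm}$ for $k\ge 2$ and keeps only $u_\xi^{\pm}$, yielding
\[
\tilde{H}^{\bsc}_{C_p}({EC_p}_+;\uFp) \cong \bFp[x,y,u_\xi^{\pm}]/(x^2).
\]

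The main obstacle I anticipate is purely bookkeeping: verifying that the assembled multiplicative structure on $E_\infty$ indeed lifts to the target ring without hidden extensions, and in particular that the unit classes $u_\eta$ become honest units in $\tilde{H}^{\bs}_{C_p}({EC_p}_+;\uFp)$. Once the per-$\alpha$ collapse is noted, the rest reduces to tracking the multiplicativity afforded by Proposition~\ref{HFPSS}.
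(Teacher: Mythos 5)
Your proposal is correct and follows essentially the same route as the paper: the paper likewise applies the spectral sequence of Proposition~\ref{HFPSS} to $X=EC_p$ with $X/G=BC_p$, identifies the $E_2$-page as $H^*(BC_p;\bFp)\otimes\bigotimes_{\eta}\bFp[u_\eta^{\pm}]$, and concludes by collapse; your write-up is in fact more careful than the paper's about why the one-row concentration forces degeneration and rules out extension problems.
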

\begin{remark}
  We identify the class $a_\xi$ with $yu_\xi$ and define $\kappa_\xi$ as $xu_\xi$, respectively. For a detailed discussion, see \cite[page 7]{BG21}.  
\end{remark}
Since $C_p$ acts on $\Conf_{p}(\mathbb{R^d})$ freely, one can use the similar technique used in Corollary $\ref{uni}$ to calculate the Bredon cohomology of configuration space:

\begin{cor}
    The $RO(C_p)$-graded Bredon cohomology of the ordered configuration space $\Conf_p(\bR^d)$, is 
    $$H^\bigstar_{G}(\Conf_p(\bR^d);\uFp) \cong H^{\ast}(\widehat{B}_p(\bR^d);\bFp)\otimes \bigotimes_{\eta \in \widehat{G} \setminus \{1 \}} \bFp [u_\eta^\pm].$$
    Here $\widehat{B}_p(\bR^d)$ denotes the orbit space $\Conf_p(\bR^d)/C_p.$
\end{cor}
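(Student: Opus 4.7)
The plan is to mimic the proof of Corollary \ref{uni}, replacing $EC_p$ by $\Conf_p(\bR^d)$. The essential input is that $C_p$ acts freely on $\Conf_p(\bR^d)$ by cyclically permuting the labelled points: a configuration $(x_1,\dots,x_p)$ fixed by a generator of $C_p$ would force $x_1=\cdots=x_p$, contradicting distinctness. Hence $\Conf_p(\bR^d)$ admits a $C_p$-CW structure with cells only of the form $G/e\times \mathbb{D}^{k}$, and the orbit space $\widehat{B}_p(\bR^d)$ inherits an ordinary CW structure, placing us directly in the setting required by Proposition \ref{HFPSS}.

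With this in hand I would run the multiplicative $RO(C_p)$-graded spectral sequence of Proposition \ref{HFPSS} with $X=\Conf_p(\bR^d)$, whose $E_2$-page reads
$$E_2^{s,\bigstar}\;=\;H^{s}\bigl(\widehat{B}_p(\bR^d);\,\pi_0(S^{-\dim\bigstar}\wedge H\bFp)\bigr).$$
Exactly as in the derivation preceding Corollary \ref{uni}, the coefficients assemble, as $\bigstar$ varies, into the graded ring $\bigotimes_{\eta\in\widehat{G}}\bFp[u_\eta^{\pm}]$; the trivial character contributes only $u_1=1$, leaving
$$E_2\;\cong\;H^{*}(\widehat{B}_p(\bR^d);\bFp)\;\otimes\;\bigotimes_{\eta\in\widehat{G}\setminus\{1\}}\bFp[u_\eta^{\pm}].$$

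The main obstacle is then establishing collapse at $E_2$. My plan is to argue that each orientation class $u_\eta$ is pulled back from the coefficient spectrum $H\bFp$ (i.e.\ from a point) and is therefore a permanent cycle; since every $u_\eta$ is a unit, multiplicativity of the spectral sequence reduces all differentials to their values on the integer-graded edge. But in integer grading the spectral sequence computes $\tilde{H}^{*}_{C_p}(\Conf_p(\bR^d)_+;\uFp)$, which for any free $C_p$-space agrees with $H^{*}(X/C_p;\bFp)$ via a direct cellular comparison (only cells $G/e\times\mathbb{D}^{k}$ contribute, and $\uFp(C_p/e)=\bFp$). Hence the integer-graded subsequence already collapses, and multiplicativity forces collapse throughout the $RO(C_p)$-graded sequence. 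Since we work over a field no extension issues arise, yielding the stated isomorphism of $\bM_p$-modules.
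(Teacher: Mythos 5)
Your proposal is correct and follows essentially the same route as the paper, which simply invokes the freeness of the $C_p$-action and reruns the spectral sequence of Proposition \ref{HFPSS} exactly as in Corollary \ref{uni}. Your extra care about collapse is sound, though it is even more immediate than you make it: for each fixed $\alpha$ the coefficient $\pi_t(S^{-\dim\alpha}\wedge H\bFp)$ is concentrated in the single row $t=-\dim\alpha$, so all differentials vanish for degree reasons.
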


At this point, we describe the ring structure of the cohomology of a point $\bM_p$ using the pullback (``Tate square") \cite{GM95}, which is induced from the isotropy separation sequence ${EC_p}_{+} \to S^0 \to \widetilde{EC_p}$ as follows: 
\begin{myeq}\label{tate}
\xymatrix{H\uFp \ar[d]_{q} \ar[r] &  \widetilde{EC_p} \wedge H\uFp \ar[d] \\ F({EC_p}_+, H\uFp) \ar[r] &\widetilde{EC_p} \wedge F({EC_p}_+, H\uFp)}
\end{myeq}
Note that one model for $\widetilde{EC_p}$ can be given by the colimit of the directed sequence $(S^{n\xi} \stackrel{a_\xi}{\to} S^{(n+1)\xi})_{n\ge 0}$. Therefore, computing $\pi_{\bsc} (\widetilde{EC_p}\wedge X)$ requires determining the $\Z$-graded homotopy groups and tensoring with $\bFp[a_\xi^\pm]$.   Thus we compute the restricted $RO(C_p)$-graded Bredon cohomology $\tilde{H}^{\bsc}_{C_p}(S^0; \uFp) \cong \pi_{-\bsc}(H\uFp)$ from the \v{C}ech-cochain 
\[
0 \to \frac{\bFp[a_\xi, \kappa_\xi, u_\xi^{\pm}]}{(\kappa_{\xi}^2)} \oplus \frac{\bFp[a_\xi^{\pm}, \kappa_\xi, u_\xi]}{(\kappa_{\xi}^2)} \to \frac{\bFp[a_\xi^\pm, \kappa_\xi, u_\xi^{\pm}]}{(\kappa_{\xi}^2)} \to 0.
\]

\begin{prop}[\cite{BG21}, Proposition 3.6]
    For odd $p$ prime, 
    \[
     \tilde{H}^{\bsc}_{C_{p}}(S^{0};\uFp)=\bFp[a_{\xi},\kappa_{\xi},u_{\xi}]/(\kappa^{2}_{\xi}) \bigoplus_{j,k\geq 1} \bFp\{\Sigma^{-1}\frac{\kappa^{\epsilon}_{\xi}}{a^{j}_{\xi}u^{k}_{\xi}}\}
    \]
    where $\epsilon\in \{0,1\}$.
\end{prop}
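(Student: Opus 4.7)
The plan is to extract $\tilde{H}^{\bsc}_{C_p}(S^0;\uFp)=\pi_{-\bsc}H\uFp$ directly from the displayed \v{C}ech cochain complex, by identifying its zeroth cohomology with the polynomial summand of the claim and its first cohomology, shifted by $\Sigma^{-1}$, with the second summand. This amounts to running the Mayer--Vietoris long exact sequence of the homotopy pullback (\ref{tate}), whose four corners have already been pinned down in the preceding discussion: the three non-trivial terms are the cohomology of ${EC_p}_+$ from Corollary~\ref{uni} (rewritten in the $a_\xi,\kappa_\xi,u_\xi$ variables via $a_\xi=yu_\xi$, $\kappa_\xi=xu_\xi$), its $a_\xi$-inverted version coming from $\widetilde{EC_p}\simeq\operatorname{colim}_n S^{n\xi}$, and their common Tate localization.

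For the zeroth cohomology I would compute the kernel of the difference map
\[
d\colon \frac{\bFp[a_\xi, \kappa_\xi, u_\xi^{\pm}]}{(\kappa_{\xi}^2)} \oplus \frac{\bFp[a_\xi^{\pm}, \kappa_\xi, u_\xi]}{(\kappa_{\xi}^2)} \longrightarrow \frac{\bFp[a_\xi^{\pm}, \kappa_\xi, u_\xi^{\pm}]}{(\kappa_{\xi}^2)}, \qquad (f,g)\mapsto f-g.
\]
Writing $A, B, C$ for the three rings in order, both $A$ and $B$ embed into $C$, so $\ker d$ is naturally identified with the intersection $A\cap B$. A monomial $a_\xi^i \kappa_\xi^\epsilon u_\xi^k\in C$ lies in $A$ precisely when $i\ge 0$ and in $B$ precisely when $k\ge 0$, so $A\cap B = \bFp[a_\xi,\kappa_\xi,u_\xi]/(\kappa_\xi^2)$, giving the polynomial summand.

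For the first cohomology I would compute the cokernel $C/(A+B)$. A monomial in $C$ fails to lie in $A+B$ precisely when both $i<0$ and $k<0$, i.e.\ when it is of the form $\kappa_\xi^\epsilon/(a_\xi^j u_\xi^k)$ with $j,k\ge 1$ and $\epsilon\in\{0,1\}$. The Mayer--Vietoris connecting map attached to (\ref{tate}) carries these cokernel classes into $\pi_{-\bsc-1}H\uFp$; the single-degree shift is what the symbol $\Sigma^{-1}$ in the statement is recording. Assembling the two computations reproduces the formula.

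The one step where care is required is showing that $\pi_{-\bsc}H\uFp$ really splits as a direct sum of these two pieces rather than forming a non-trivial extension. This I expect to follow from a grading count: the kernel contribution is supported on monomials with at least one of $i,k\ge 0$, while the $\Sigma^{-1}$-shifted cokernel contribution is supported only on monomials with both exponents strictly negative, so the two families lie in disjoint $\bsc$-gradings. Hence at every $\bsc$ at most one summand is non-zero, the long exact sequence trivially collapses, and no extension problem can arise. Once this separation is confirmed, everything else is a direct monomial tally inside $\bFp[a_\xi^{\pm},\kappa_\xi,u_\xi^{\pm}]/(\kappa_\xi^2)$.
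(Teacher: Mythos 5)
Your proposal is correct and takes essentially the same route as the paper, which cites \cite{BG21} for this proposition and whose only indicated argument is precisely the Tate square \eqref{tate} together with the displayed \v{C}ech cochain complex; you simply carry out the kernel/cokernel computation explicitly. Your resolution of the extension problem is also sound: writing a monomial's degree as $m+n\xi$ with $n=i+k+\epsilon$, the kernel $A\cap B$ has $n\ge 0$ while the cokernel classes have $n\le -1$, and the $\Sigma^{-1}$ shift only moves the integer part $m$, so the two contributions indeed sit in disjoint $\bsc$-gradings (note only the small slip that the kernel is supported where \emph{both} $i,k\ge 0$, not ``at least one'').
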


Note that both  $\tilde{H}^{\bs}_{C_{p}}(S^{0};\uFp)$ and $\bM_p= \tilde{H}^{\bsc}_{C_{p}}(S^{0};\uFp)$ are rings and for any $C_p$-space $X$, the cohomology $\tilde{H}^{\bs}_{C_{p}}(X_+;\uFp)$ carries the structure of a module over both rings.  The $\mathbb{M}_{p}$-module structure on $\tilde{H}^\bs_{C_p}({EC_p}_+; \uFp)$ is given by the map $q$ (in \eqref{tate}) as follows.
\begin{prop}[\cite{BG21}, Proposition 3.10]
     For the odd prime $p$, the action of $u_{\xi},a_{\xi},\kappa_{\xi} \in \bM_p$ on $\tilde{H}^{\bigstar}_{C_p}({EC_{p}}_{+};\uFp)$  is given by the multiplication of classes $u_{\xi}, yu_{\xi}$ and $xu_{\xi}$ respectively.
 \end{prop}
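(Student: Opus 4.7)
The plan is to unwind the $\bM_p$-module structure in concrete terms by tracing the homomorphism induced by collapsing $EC_p$. The $\bM_p$-action on $\tilde{H}^{\bs}_{C_p}({EC_p}_{+};\uFp)$ is, by construction, given by the ring homomorphism
\[
q_\ast : \bM_p = \tilde{H}^{\bs}_{C_p}(S^0;\uFp) \longrightarrow \tilde{H}^{\bs}_{C_p}({EC_p}_{+};\uFp)
\]
arising from the left vertical map $q$ of the Tate square \eqref{tate} (equivalently, from the collapse ${EC_p}_{+} \to S^0$). Thus the proposition reduces to computing $q_\ast$ on the three generators and matching the outputs, inside the polynomial presentation of Corollary \ref{uni}, with $u_\xi$, $yu_\xi$, and $xu_\xi$ respectively.

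For $u_\xi$ the identification is essentially tautological: both $u_\xi \in \bM_p$ and the Laurent generator in the $\xi$-factor on the right side of Corollary \ref{uni} are the orientation class of $\xi$; naturality of the orientation class under restriction along ${EC_p}_{+}\to S^0$ gives $q_\ast(u_\xi)=u_\xi$.

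For $a_\xi$ I would note that $a_\xi u_\xi^{-1}$ lives in integer degree $2$. Under the identification
\[
\tilde{H}^{\bs}_{C_p}({EC_p}_{+};\uFp) \cong H^\ast(BC_p;\bFp)\otimes \bigotimes_{\eta \neq 1}\bFp[u_\eta^{\pm}]
\]
coming from the multiplicative spectral sequence of Proposition \ref{HFPSS}, the image of $a_\xi u_\xi^{-1}$ should be the mod-$p$ Euler class of the complex line bundle $EC_p \times_{C_p}\xi \to BC_p$. For odd $p$ this is precisely the degree-$2$ polynomial generator $y\in H^\ast(BC_p;\bFp)$, yielding $q_\ast(a_\xi)=yu_\xi$. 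The remaining class $\kappa_\xi$ is then handled by naturality of the Bockstein: $\kappa_\xi u_\xi^{-1}$ has integer degree $1$ and is the Bockstein partner of $a_\xi u_\xi^{-1}$ in $\bM_p$, so $q_\ast(\kappa_\xi u_\xi^{-1})$ must match the classical relation $\beta(x)=y$ in $H^\ast(BC_p;\bFp)$, forcing $q_\ast(\kappa_\xi)=xu_\xi$.

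The main obstacle is the geometric identification used for $a_\xi$: one has to verify that under the isomorphism of Corollary \ref{uni} the image of $a_\xi u_\xi^{-1}$ really is the mod-$p$ Euler class of the bundle $EC_p\times_{C_p}\xi$. Concretely this means tracing the map $a_\xi:S^0\to S^\xi$ through the spectral sequence of Proposition \ref{HFPSS} applied to $EC_p$, and identifying the class it represents on the $E_2$-page with the polynomial generator $y\in H^2(BC_p;\bFp)$. Once this step is carried out, the claims for $u_\xi$ and $\kappa_\xi$ become formal consequences of the naturality of the orientation class and of the Bockstein with respect to the collapse $EC_{p+}\to S^0$.
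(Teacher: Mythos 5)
Your argument is correct and is essentially the paper's own route: the proposition is quoted from \cite{BG21}, and the surrounding text already sets up exactly your reduction --- the $\bM_p$-action is the ring map $q$ from the Tate square \eqref{tate}, $u_\xi$ goes to the Laurent generator by naturality of orientation classes, and the identifications $a_\xi\mapsto yu_\xi$, $\kappa_\xi\mapsto xu_\xi$ are recorded in the remark following Corollary \ref{uni} (with $y$ normalized as the mod-$p$ Euler class of $EC_p\times_{C_p}\xi$, which disposes of what you call the main obstacle). Your Bockstein step for $\kappa_\xi$ correctly pins down the unit; alternatively, injectivity of $q$ on the polynomial part $\bFp[a_\xi,\kappa_\xi,u_\xi]/(\kappa_\xi^2)$ of $\bM_p$ (immediate from the Tate square, since the negative cone is concentrated in other degrees) already forces $q(\kappa_\xi)$ to be the unique nonzero class $xu_\xi$ up to a scalar.
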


Note that $C_p$ acts on $\Conf_p(\bR^d)$ freely. Hence the Borel construction $EC_p \times_{C_p} \Conf_p(\bR^d) \simeq \widehat{B}_p(\bR^d).$ To compute the cohomology $H^\ast(\widehat{B}_p(\bR^d); \bFp)$, we consider the Borel fibration 
\[
\Conf_p(\bR^d) \to \widehat{B}_p(\bR^d) \stackrel{\pi}{\to} BC_p
\]
 and the associated Serre spectral sequence with the $E_2$-page given by
 \[
 E_2^{r,s} = H^r(C_p; H^s(\Conf_p(\bR^d); \bFp)) \Rightarrow H^{r+s}(\widehat{B}_p(\bR^d); \bFp)
 \]

Here, we shall not compute the above spectral sequence, which is a tedious job and completely computed in  \cite{BLZ}. Rather, we observe an immediate fact that the map $\pi^\ast \colon H^j(BC_p; \bFp) \to H^j(\widehat{B}_p(\bR^d); \bFp)$ is a non-trivial map for $j \le (p-1)(d-1).$ In particular, $\pi^\ast(x)$ and $\pi^\ast(y)$ are non-trivial, where $H^\ast(BC_p; \bFp)\cong \frac{\bFp[x, y]}{(x^2)}$ with $|x|=1$ and $|y|=2.$

Since $\Conf_p(\bR^d)$ is a free $C_p$-space, hence we have  based $C_p$-maps $\Conf_p(\bR^d)_+ \to {EC_p}_+ \to S^0$. Thus $\tilde{H}^\bigstar_{G}(\Conf_p(\bR^d)_+;\uFp)$ is an $\mathbb{M}_p$-module. Moreover, we have the following commutative diagram
\[  
\xymatrix{\bM_p\ar[rr] \ar[drr] && \tilde{H}^\bs_{C_p}({EC_p}_+; \uFp) \cong H^{\ast}(BC_p)\otimes \bigotimes\limits_{\eta \in \widehat{G} \setminus \{1 \}} \bFp [u_\eta^\pm]\ar[d]^{\pi^\ast \otimes \Id} \\ & & \tilde{H}^\bs_{C_p}({\Conf_p(\bR^d)}_+; \uFp) \cong H^{\ast}(\widehat{B}_p(\bR^d))\otimes \bigotimes\limits_{\eta \in \widehat{G} \setminus \{1 \}} \bFp [u_\eta^\pm]}.
\]
Thus we obtain
\begin{prop}\label{action}
The action of $u_\xi \in \bM_p$ on $\tilde{H}^\bs_{C_p}({\Conf_p(\bR^d)}_+; \uFp)$ is given by the multiplication of the corresponding element $u_{\xi}$. The action of $a_\xi, \kappa_\xi$ equal multiplications by $\pi^\ast(y)u_\xi$  and $\pi^\ast(x)u_\xi$ respectively.
\end{prop}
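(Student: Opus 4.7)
The plan is to reduce the claim to the preceding proposition (the $EC_p$ case) by exploiting naturality of the $\bM_p$-action along a factorization of the collapse map through $EC_p$.

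First I would note that since $\Conf_p(\bR^d)$ is a free $C_p$-space, there is a $C_p$-equivariant classifying map $\Conf_p(\bR^d) \to EC_p$, unique up to $C_p$-equivariant homotopy. Taking based maps and smashing with the sphere spectrum yields a factorization
\[
\Conf_p(\bR^d)_+ \longrightarrow {EC_p}_+ \longrightarrow S^0
\]
of the collapse map. Applying $\tilde{H}^{\bs}_{C_p}(-;\uFp)$ gives exactly the commutative triangle of $\bM_p$-module maps displayed just before the statement, in which the right-hand vertical arrow $\pi^\ast \otimes \Id$ arises from the Borel-construction identification $EC_p \times_{C_p} \Conf_p(\bR^d) \simeq \widehat{B}_p(\bR^d)$ together with the tensor decompositions provided by Corollary \ref{uni} and its configuration-space analogue.

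Next I would invoke the preceding proposition, which pins down the $\bM_p$-action on $\tilde{H}^{\bs}_{C_p}({EC_p}_+;\uFp)$: the elements $u_\xi,\, a_\xi,\, \kappa_\xi \in \bM_p$ act by multiplication by $u_\xi,\, yu_\xi,\, xu_\xi$ respectively. Since the $\bM_p$-module structure on $\tilde{H}^{\bs}_{C_p}(\Conf_p(\bR^d)_+;\uFp)$ is, by definition, induced by the collapse map $\Conf_p(\bR^d)_+ \to S^0$, and since that collapse factors through ${EC_p}_+$, the action of any $\alpha \in \bM_p$ on the configuration-space cohomology is obtained by first computing the action on $\tilde{H}^{\bs}_{C_p}({EC_p}_+;\uFp)$ and then pushing forward along $\pi^\ast \otimes \Id$. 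Chasing the three classes through the triangle yields $u_\xi \mapsto u_\xi$, $a_\xi \mapsto \pi^\ast(y)u_\xi$, and $\kappa_\xi \mapsto \pi^\ast(x)u_\xi$, which is exactly the claimed description.

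The only point that needs real justification is the identification of the induced map $\tilde{H}^{\bs}_{C_p}({EC_p}_+;\uFp) \to \tilde{H}^{\bs}_{C_p}(\Conf_p(\bR^d)_+;\uFp)$ with $\pi^\ast \otimes \Id$ under the product decompositions. I expect this to be the main (and essentially only) obstacle. It follows from the naturality of the multiplicative $RO(C_p)$-graded spectral sequence of Proposition \ref{HFPSS}: the classifying map $\Conf_p(\bR^d) \to EC_p$ induces a map of spectral sequences that on the $E_2$-page is $\pi^\ast \otimes \Id$ on $H^\ast(-)\otimes \bigotimes_{\eta\neq 1}\bFp[u_\eta^\pm]$, and since the spectral sequences collapse to tensor products in these two free cases, the induced map on abutments has the same description. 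Once this naturality is recorded, the proposition is immediate from the triangle.
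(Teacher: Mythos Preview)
Your proposal is correct and follows essentially the same approach as the paper: the paper's argument is precisely the commutative triangle you describe, obtained from the factorization $\Conf_p(\bR^d)_+ \to {EC_p}_+ \to S^0$ together with the known $\bM_p$-action on $\tilde{H}^{\bs}_{C_p}({EC_p}_+;\uFp)$, and then reading off the action via $\pi^\ast \otimes \Id$. If anything, you supply more justification than the paper does, since you explain why the induced map is $\pi^\ast \otimes \Id$ via naturality of the spectral sequence of Proposition~\ref{HFPSS}, whereas the paper simply asserts the diagram.
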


\section{Connection with equivariant maps and the proof}

\subsection{The equivariant map} The general idea of the ``Configuration space/Test map” scheme says an affirmative answer to a topological combinatorics problem is equivalent to the non-existence of some equivariant map from a specific configuration space to the test space.

In \cite{KHA14}, this scheme is applied to investigate the validity of the Nandakumar-Ramana Rao conjecture and its generalized version, as follows:
\begin{prop}
    Let $d \geq 1, n \geq 2$ be integers. Let $K\subset \mathbb{R}^{d}$ be a $d$-dimensional convex body. If there is no $\Sigma_{n}$-equivariant map of the form 
    \[
    \Conf_{n}(\mathbb{R}^{d}) \to S(W_{n}^{\oplus(d-1)})
    \]
    then the Nandakumar-Ramana Rao conjecture has a solution.
\end{prop}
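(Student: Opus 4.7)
The plan is to apply the standard Configuration Space/Test Map scheme: associate to each ordered $n$-tuple a canonical convex equipartition of $K$ with respect to $\mu$, and then use the continuous functionals $\phi_1,\dots,\phi_{d-1}$ to manufacture a $\Sigma_n$-equivariant test map whose vanishing encodes the remaining equipartition requirements.

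First, for each configuration $(x_1,\dots,x_n)\in\Conf_n(\bR^d)$ I would associate a convex partition $(P_1,\dots,P_n)$ of $\bR^d$ via a generalized Voronoi (power) diagram. Invoking the equipartition theorem underlying \cite{KHA14}, there exist real weights $\lambda_1,\dots,\lambda_n$, unique up to a common additive constant, such that the power cells
$$
P_i \;:=\; \{\, y \in \bR^d : |y - x_i|^2 - \lambda_i \leq |y - x_j|^2 - \lambda_j \text{ for all } j \,\}
$$
are convex, interiorly disjoint, tile $\bR^d$, and satisfy $\mu(P_i \cap K) = \mu(K)/n$. An implicit-function-style argument based on the strict convexity of the underlying optimization problem then yields that the assignment $(x_1,\dots,x_n)\mapsto (P_1,\dots,P_n)$ is continuous (say in the Hausdorff metric on convex bodies) and $\Sigma_n$-equivariant with respect to the permutation action on both sides.

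Next, I would define the test map
$$
\tau : \Conf_n(\bR^d) \longrightarrow \bigoplus_{i=1}^{d-1} \bR^n, \qquad \tau(x_1,\dots,x_n) \;=\; \bigl(\phi_i(P_j \cap K)\bigr)_{i,j},
$$
and project each $\bR^n$-summand orthogonally onto $W_n = \{\, v \in \bR^n : \sum_j v_j = 0 \,\}$, obtaining a $\Sigma_n$-equivariant map
$$
\bar\tau : \Conf_n(\bR^d) \longrightarrow W_n^{\oplus(d-1)}.
$$
By construction $\bar\tau(x_1,\dots,x_n) = 0$ if and only if $\phi_i(P_1 \cap K) = \cdots = \phi_i(P_n \cap K)$ for every $1\leq i\leq d-1$, which together with the mass equality above is exactly the Nandakumar-Ramana Rao conclusion. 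If no such zero existed, then $\bar\tau/\|\bar\tau\|$ would be a well-defined $\Sigma_n$-equivariant map $\Conf_n(\bR^d) \to S(W_n^{\oplus(d-1)})$, contradicting the hypothesis; hence the desired partition must exist.

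The main obstacle lies in the first step: establishing the existence, uniqueness, and \emph{continuous} dependence of the power-diagram equipartition on the configuration. This is where the optimal-transport/convex-programming input of Karasev--Hubard--Aronov (and the related work of Soberón) is essential; once that analytic input is in place, the remaining steps are essentially formal rewritings in the language of equivariant maps.
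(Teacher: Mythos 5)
Your argument is correct and is essentially the standard Configuration Space/Test Map reduction via weighted Voronoi (power-diagram) equipartitions that the paper itself does not reprove but simply imports from Karasev--Hubard--Aronov \cite{KHA14}; your reconstruction, including the identification of the continuity and uniqueness of the equal-measure power diagram as the key analytic input, matches that source. No gaps beyond the (correctly flagged and correctly attributed) reliance on the optimal-transport existence/continuity theorem.
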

In the above proposition, the action of $\Sigma_{n}$ on the standard $\Sigma_n$-representation $W_{n}=\{(v_1,\cdots,v_n)\in \mathbb{R}^n|\sum v_i=0\}$ is given by permuting the coordinates. Since $C_{n}\subset\Sigma_{n}$, the above spaces inherit the restricted $C_{n}$-action. Consequently, the non-existence of a $C_{n}$-equivariant map is sufficient to affirm the conjecture.

\subsection{Bredon cohomology of representation spheres}
Let $V$ be a $G$-representation. We define  $\CC_{[\supseteq V]}$ -- the collection of $G$-representations $W$ such that $V$ is a sub-representation of $W$. We denote by $W \ominus V$, the orthogonal complement of $V$ in $W.$
\begin{lemma}\label{Cohsphere}
    For $V \in \Rep(C_{p}),$ $\tilde{H}^W_{C_p}(S(V)_+; \uFp)=0$, where $W \in \CC_{[\supseteq V]}.$
\end{lemma}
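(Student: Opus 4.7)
The plan is to exploit the cofibration of based $C_p$-spaces
\[
S(V)_+ \to D(V)_+ \simeq S^0 \xrightarrow{a_V} S^V
\]
and its induced long exact sequence in $RO(C_p)$-graded Bredon cohomology. Using the suspension isomorphism $\tilde{H}^W_{C_p}(S^V;\uFp) \cong \bM_p^{W-V}$, the connecting maps identify with multiplication by the Euler class $a_V$. Writing $W = V \oplus W'$ with $W' := W \ominus V \in \Rep(C_p)$, the relevant window of the sequence reads
\[
\bM_p^{W'} \xrightarrow{\,\cdot\, a_V\,} \bM_p^{V+W'} \longrightarrow \tilde{H}^W_{C_p}(S(V)_+;\uFp) \longrightarrow \bM_p^{W'+1} \xrightarrow{\,\cdot\, a_V\,} \bM_p^{V+W'+1},
\]
so the desired vanishing reduces to (i) $\bM_p^{W'+1} = 0$ and (ii) surjectivity of the left-hand map.

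For (i), I would invoke Remark \ref{cpgrad} to reduce every grading to the form $m + n\xi$ (where $m$ records the trivial part and $2n$ the non-trivial part) and read off from the preceding proposition the key vanishing
\[
\bM_p^{m+n\xi} = 0 \qquad \text{whenever } m \ge 1 \text{ and } n \ge 0.
\]
Indeed, monomials in the polynomial cone $\bFp[a_\xi,\kappa_\xi,u_\xi]/(\kappa_\xi^2)$ all have non-positive integer part in their bidegree, and the negative summands $\Sigma^{-1}\kappa_\xi^{\epsilon}/(a_\xi^j u_\xi^k)$ all have strictly negative $\xi$-component. Since $W' \in \Rep(C_p)$, the shifted grading $W'+1$ automatically has trivial part $\ge 1$, which settles (i) and annihilates the right-hand connecting map.

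For (ii), I split on whether $V+W'$ carries a trivial summand. If it does, the same vanishing forces $\bM_p^{V+W'} = 0$ and surjectivity is automatic. Otherwise $V \equiv n\xi$ and $W' \equiv n'\xi$ in the reduced grading, and the same computation yields $\bM_p^{k\xi} = \bFp\{a_\xi^k\}$ for every $k \ge 0$; multiplication by $a_V$ then acts as a nonzero scalar multiple of multiplication by $a_\xi^n$, giving an isomorphism $\bFp\{a_\xi^{n'}\} \xrightarrow{\sim} \bFp\{a_\xi^{n+n'}\}$.

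The one delicate point is the final claim that $a_V$ descends to a \emph{nonzero} multiple of $a_\xi^n$ when $V = \sum_k n_k \xi^k$ with $\sum_k n_k = n$ contains summands other than copies of $\xi$. I plan to verify this by detecting the product $a_V = \prod_k a_{\xi^k}^{n_k}$ in the Borel picture of Corollary \ref{uni}: each $a_{\xi^k}$ is a unit multiple of $a_\xi = y u_\xi$ (since $c_1(\xi^k) = k y$ in $H^\ast(BC_p;\bFp)$ and $k \not\equiv 0 \pmod{p}$), so $a_V$ reduces to a nonzero multiple of $a_\xi^n$, and Proposition \ref{action} transports this conclusion back to $\bM_p$.
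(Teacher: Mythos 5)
Your proof is correct, but it takes a genuinely different route from the paper. The paper argues by induction on the irreducible summands of $V$, using the cofibration $S(V\ominus L)_+ \to S(V)_+ \to S^{V\ominus L}\wedge S(L)_+$ to reduce to the case $V=\xi^k$, which is then killed by the free two-cell structure $C_p/e_+ \xrightarrow{1-g^k} C_p/e_+ \to S(\xi^k)_+$; everything bottoms out in the vanishing of $\tilde{H}^j(S^0;\bFp)$ for $j>0$, so no knowledge of $\bM_p$ is needed. You instead apply the Euler-class cofibration $S(V)_+\to S^0 \xrightarrow{a_V} S^V$ once and reduce the lemma to algebra in $\bM_p$: the vanishing $\bM_p^{m+n\xi}=0$ for $m\ge 1$, $n\ge 0$ (which you read off correctly from both the polynomial cone and the negative summands) together with surjectivity of $\cdot\, a_V$. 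Your approach is heavier on input --- it needs the full computation of $\bM_p$ and, crucially, the fact that $a_{\xi^k}$ becomes a unit multiple of $a_\xi$ after the grading reduction of Remark \ref{cpgrad} --- but you correctly identified this as the delicate point and your Borel-detection argument via $c_1(\xi^k)=ky$ and the injectivity of the positive cone of $\bM_p$ into $\tilde{H}^{\bs}_{C_p}({EC_p}_+;\uFp)$ settles it (note only that the relevant statement for this detection is the proposition describing the $\bM_p$-action on $\tilde{H}^{\bs}_{C_p}({EC_p}_+;\uFp)$, not Proposition \ref{action}, which concerns the configuration space). In exchange your argument is slightly more informative, since it exhibits $\tilde{H}^W_{C_p}(S(V)_+;\uFp)$ explicitly as the cokernel and kernel of multiplication by $a_V$ on $\bM_p$, whereas the paper's induction only delivers the vanishing.
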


\begin{proof}
If $V$ is reducible with a summand $L$, that is,  $V \in \CC_{[\supseteq L]},$  consider the following $G$-cofiber sequence
\[
S(V\ominus L)_+ \to S(V)_+ \to S^{V\ominus L}\wedge S(L)_+
\]
which induces the long exact sequence in Bredon cohomology:
\[
\cdots \tH^{\bs -(V\ominus L)}_{G}(S(L)_+; \uFp) \to \tHbs(S(V)_+; \uFp) \to \tHbs(S(V\ominus L)_+; \uFp) \to \cdots
\]
By induction on the dimension of the representation $V$ and using $\bs=W \in \CC_{[\supseteq V]}$, it follows that both corner groups are trivial, leading to the desired result.

Therefore, we are left with the case when $V$ is irreducible, that is, $V= \xi^k$ for some $k =1, \cdots , \frac{p-1}{2}.$ Now consider the $C_p$-cofiber sequence (in $C_p$-spectra)
\begin{myeq}\label{sxi}
 C_{p}/e_{+} \stackrel{1-g^k}{\to} C_{p}/e_{+} \to S(\xi^k)_{+},
\end{myeq}
where we fix $g$ to be a generator of $C_p.$ The associated long exact sequence in Bredon cohomology for \eqref{sxi}, along with the identification $\tilde{H}^\alpha_{C_p}({C_p/e}_+; \uFp) \cong \tilde{H}^{\dim(\alpha)} (S^0; \bFp)$ completes the proof.
\end{proof}

\begin{prop}
    For a cyclic group $G$ with order having two distinct prime divisors, the Euler class $a_{\bar{\rho}}$ vanishes. Here, $\bar{\rho}$ denotes the reduced regular representation of $G.$
\end{prop}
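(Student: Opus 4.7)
The plan is to restrict $a_{\bar{\rho}}$ to a Sylow subgroup of $G$, observe that there the reduced regular representation visibly picks up a trivial summand, and then push the vanishing back to $G$ via a standard transfer argument.

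To set up, fix a prime $p$ dividing $|G|$, and let $H\subset G$ denote its (cyclic) $p$-Sylow subgroup. Because $|G|$ has a second prime divisor, the index $k:=[G:H]$ satisfies $k\ge 2$. The first step is to compute $\bar{\rho}|_{H}$. Since $\rho=\mathbb{R}[G]$ is induced from the trivial subgroup, it splits under $H$ into one copy of $\rho_{H}$ per right $H$-coset of $G$; that is, $\rho|_{H}\cong \rho_{H}^{\oplus k}$. Subtracting the fixed line yields
\[
\bar{\rho}\big|_{H}\;\cong\;\bar{\rho}_{H}^{\oplus k}\;\oplus\;\mathbf{1}^{\oplus (k-1)},
\]
which contains at least one trivial summand because $k-1\ge 1$. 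Applying $a_{V\oplus W}=a_{V}a_{W}$ together with $a_{\mathbf{1}}=0$, we deduce that
\[
\res^{G}_{H}(a_{\bar{\rho}})\;=\;a_{\bar{\rho}|_{H}}\;=\;0 \quad \text{in} \quad \tilde{H}^{\bar{\rho}|_{H}}_{H}(S^{0};\uFp).
\]

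The second step is to promote this Sylow-level vanishing back to $G$. For the constant Mackey functor $\uFp$, the composition $\tr^{G}_{H}\circ \res^{G}_{H}$ acts on $\tilde{H}^{\bs}_{G}(S^{0};\uFp)$ as multiplication by the index $k=[G:H]$. By our choice of $H$ as the $p$-Sylow, $k$ is coprime to $p$, hence a unit in $\mathbb{F}_{p}$, so $\res^{G}_{H}$ is split injective. Combined with the first step, this gives $a_{\bar{\rho}}=0$.

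The representation-theoretic calculation in the first step is routine, and the main (mild) technical point is the transfer--restriction injectivity invoked in the second step, which is standard for the constant Mackey functor with coefficients prime to the index. It is worth emphasizing that the hypothesis that $|G|$ has two distinct prime divisors enters in exactly one place: it is precisely what makes the $p$-Sylow $H$ a proper subgroup, forcing $k\ge 2$ and thereby producing the decisive trivial summand in $\bar{\rho}|_{H}$.
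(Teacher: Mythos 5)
Your proof is correct, but it takes a genuinely different route from the paper's. The paper decomposes $\bar{\rho}$ into one\-dimensional complex summands, uses the explicit $G$-CW structure $S^0\cup(G/H_+\wedge e^1)\cup(G/H_+\wedge e^2)$ on $S^\eta$ to show that $a_\eta$ is annihilated by $|G/\ker\eta|$, and then selects the two summands $\xi^{n/p}$ and $\xi^{n/q}$, whose annihilators $p$ and $q$ are coprime; since $a_{\bar{\rho}}$ is a multiple of both $a_{\xi^{n/p}}$ and $a_{\xi^{n/q}}$, it is killed by both primes and hence vanishes. That argument is integral, taking place in $\tilde{H}^{\bs}_G(S^0;\uZ)$, whereas yours is intrinsically a mod-$p$ statement: the prime of the coefficient Mackey functor $\uFp$ must be the same prime whose Sylow subgroup you restrict to, since that is what makes $[G:H]$ a unit. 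In exchange, your Sylow-restriction-plus-transfer argument buys two things the paper's does not. First, it makes no use of cyclicity: the branching formula $\bar{\rho}|_H\cong\bar{\rho}_H^{\oplus k}\oplus\mathbf{1}^{\oplus(k-1)}$ and the identity $\tr^G_H\circ\res^G_H=[G:H]$ for the constant Mackey functor (via Frobenius reciprocity and $\tr^G_H(1)=[G:H]$) hold for any finite group and any subgroup, while the paper's reduction to one-dimensional characters relies on $G$ being cyclic. Second, for $\uFp$-coefficients it isolates the sharp hypothesis, namely that $|G|$ is not a power of the coefficient prime; the ``two distinct prime divisors'' assumption is only what is needed to make this hold for every prime simultaneously. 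If you wanted to recover the integral conclusion of the paper, you would run the same transfer argument over all Sylow subgroups and observe that the indices $|G|/p^{v_p(|G|)}$ have greatest common divisor $1$ when $|G|$ has at least two prime divisors.
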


\begin{proof}
Let \( G = C_n \) be the cyclic group of order \( n \), and let \( p \neq q \) be two distinct prime divisors of \( n \). We construct two subrepresentations \( V_p \) and \( V_q \) of the reduced regular representation \( \bar{\rho} \) such that
\[
\gcd\left( \frac{|G|}{|G_{V_p}|}, \frac{|G|}{|G_{V_q}|} \right) = 1,
\]
where \( G_V \) denotes the stabilizer subgroup of the representation \( V \).

To achieve this, let \( \eta \) be a non-trivial one-dimensional complex \( G \)-representation induced by a group homomorphism \( \eta \colon G \to S^1 \), with kernel \( H = \ker(\eta) \). The corresponding representation sphere \( S^\eta \) admits a \( G \)-CW complex structure of the form
\[
S^0 \cup (G/H_+ \wedge e^1) \cup (G/H_+ \wedge e^2).
\]
The associated equivariant cellular chain complex with constant coefficients \( \uZ \) is
\[
\mathbb{Z} \xrightarrow{0} \mathbb{Z} \xrightarrow{|G/H|} \mathbb{Z},
\]
which gives rise to the isomorphism
\[
\tilde{H}^\eta_G(S^0; \uZ) \cong \mathbb{Z}/|G/H|\mathbb{Z}.
\]
In particular, this implies that \( |G/H| \cdot a_\eta = 0 \).

We now choose the subrepresentations \( V_p = \xi^{n/p} \) and \( V_q = \xi^{n/q} \), where \( \xi^d \) denotes the one-dimensional complex representation of \( G \) corresponding to rotation by angle \( \frac{2\pi d}{n} \). Since the indices \( \frac{|G|}{|G_{V_p}|} \) and \( \frac{|G|}{|G_{V_q}|} \) are coprime, it follows that the Euler class \( a_{\bar{\rho}} = 0 \).
\end{proof}

\begin{remark}
This proposition sheds light on the fact that the non-existence of maps from any free $C_n$-space $X$  to the sphere $S(k\bar{\rho})$ is not possible if $n$ is not a prime power. Although we are not considering the non-prime power case, one can use obstruction theory for Bredon cohomology for proof.
\end{remark}

\subsection{The proof}
Observe that the inclusion ${EC_p}^{((d-1)(p-1))} \subseteq EC_p$ induces an isomorphism in $\tilde{H}^\ast_{C_p}(-, \uFp)$ for $\ast \leq (d-1)(p-1)-1$ and is injective for $\ast = (d-1)(p-1).$ Since this is an inclusion of free $C_p$-spaces the result also holds for $\tilde{H}^\alpha_{C_p}(-; \uFp)$ where $\alpha \in RO(C_p)$ with $\dim \alpha \leq (d-1)(p-1).$ In particular,  observe for $1 \in \tilde{H}^0_{C_p}({\Conf_p(\bR^d)}_+; \uFp),$ Proposition \ref{action} yields 
 $$a_{\frac{(d-1)(p-1)}{2} \xi} . 1= {\pi^\ast(y)}^{\frac{(d-1)(p-1)}{2}}u_\xi^{\frac{(d-1)(p-1)}{2}} \neq 0,$$
thus, 
\begin{myeq} \label{acteg} 
0\neq a_{\frac{(d-1)(p-1)}{2} \xi} . 1 \in \tilde{H}^{\frac{(d-1)(p-1)}{2}\xi}_{C_p}({\Conf_p(\bR^d)}_+; \uFp).
\end{myeq}

\begin{thm}
    There does not exist any $\Sigma_p$-equivariant map from $\Conf_p(\bR^d)$ to $S(W_p^{\oplus (d-1)}).$
\end{thm}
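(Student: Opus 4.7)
\emph{Plan.} The plan is to argue by contradiction, exploiting the $\bM_p$-module structure on Bredon cohomology to pit the vanishing coming from Lemma~\ref{Cohsphere} against the non-vanishing recorded in \eqref{acteg}. First, since $C_p \subset \Sigma_p$, it suffices to rule out a $C_p$-equivariant map $f \colon \Conf_p(\bR^d) \to S(V)$ with $V := W_p^{\oplus(d-1)}$. Next, I identify $V$ as a $C_p$-representation: restricting $W_p$ to $C_p$ gives the reduced regular representation, so
\[
V \cong (d-1)\bar{\rho} \cong \bigoplus_{k=1}^{(p-1)/2} (d-1)\xi^k,
\]
a sum of non-trivial irreducible $C_p$-representations with total real dimension $(d-1)(p-1)$.

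Assuming such an $f$ exists, the induced pullback $f^\ast$ is a morphism of $\bM_p$-modules from $\tilde{H}^{\bs}_{C_p}(S(V)_+;\uFp)$ to $\tilde{H}^{\bs}_{C_p}(\Conf_p(\bR^d)_+;\uFp)$ sending $1$ to $1$. Applying Lemma~\ref{Cohsphere} with $W = V$ yields $\tilde{H}^V_{C_p}(S(V)_+;\uFp) = 0$, so the Euler class acts trivially on the unit in the source: $a_V \cdot 1 = 0$. By naturality of the module action, we also have $a_V \cdot 1 = 0$ in $\tilde{H}^V_{C_p}(\Conf_p(\bR^d)_+;\uFp)$.

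To produce the contradiction, I translate this vanishing into the restricted $\bsc$-grading. Since $\dim V^{C_p} = 0$ and $\dim V = (d-1)(p-1)$, Remark~\ref{cpgrad} identifies $\tilde{H}^V_{C_p}(\Conf_p(\bR^d)_+;\uFp)$ with $\tilde{H}^{\frac{(d-1)(p-1)}{2}\xi}_{C_p}(\Conf_p(\bR^d)_+;\uFp)$. Under the equivalences $S^{\xi^k - \xi} \wedge H\uFp \simeq H\uFp$, each Euler class $a_{\xi^k}$ corresponds to a nonzero scalar multiple of $a_\xi$ (because $\tilde{H}^\xi_{C_p}(S^0;\uFp) \cong \bFp \cdot a_\xi$). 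Consequently, $a_V = \prod_{k=1}^{(p-1)/2} a_{\xi^k}^{d-1}$ corresponds to a unit multiple of $a_\xi^{(d-1)(p-1)/2} = a_{\frac{(d-1)(p-1)}{2}\xi}$, and its vanishing directly contradicts \eqref{acteg}. The crux of the argument is this last identification: one must verify that the non-canonical equivalences above really do send each $a_{\xi^k}$ to a genuine $\bFp$-unit multiple of $a_\xi$, so that the product $a_V$ lands on a unit multiple of the expected power of $a_\xi$. Everything else---the restriction of $W_p$ to $C_p$, Lemma~\ref{Cohsphere}, and the naturality of $f^\ast$ as an $\bM_p$-module map---is either immediate or already contained in the preliminaries.
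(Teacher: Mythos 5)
Your proposal is correct and follows essentially the same route as the paper: contradiction via the $\bM_p$-module structure of $f^\ast$, the vanishing from Lemma~\ref{Cohsphere} combined with Remark~\ref{cpgrad}, and the non-vanishing \eqref{acteg}. The only (harmless) difference is bookkeeping: the paper multiplies the unit directly by $a_{\frac{(d-1)(p-1)}{2}\xi}$ and invokes the vanishing of the whole group $\tilde{H}^{\frac{(d-1)(p-1)}{2}\xi}_{C_p}(S(\bar\rho^{\oplus(d-1)})_+;\uFp)$, which sidesteps the comparison of $a_{\xi^k}$ with unit multiples of $a_\xi$ that you flag as the crux of your version.
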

  
\begin{proof}
Note that $\res_{C_p}(W_p)$ is the reduced regular representation $\bar{\rho}$ of $C_p$. Suppose on the contrary, there is a $C_p$-map $f: \Conf_p(\bR^d) \to S(\bar{\rho}^{\oplus (d-1)}).$ Then it induces a $\tilde{H}^\bs_{C_p}(S^0; \uFp)$-module map 
$$f^\bs: \tilde{H}^\bs_{C_p}(S(\bar{\rho}^{\oplus (d-1)})_+; \uFp) \to \tilde{H}^\bs_{C_p}({\Conf_p(\bR^d)}_+; \uFp).$$ 

Thus, we have a commutative diagram
\[
\xymatrix{\tilde{H}^0_{C_p}(S(\bar{\rho}^{\oplus (d-1)})_+; \uFp)\ar[d]_{a_{\frac{(p-1)(d-1)}{2} \xi}.} \ar[r]^{f^\bs} & \tilde{H}^0_{C_p}({\Conf_p(\bR^d)}_+; \uFp) \ar[d]^{a_{\frac{(p-1)(d-1)}{2} \xi}.} \\ \tilde{H}^{\frac{(p-1)(d-1)}{2} \xi}_{C_p}(S(\bar{\rho}^{\oplus (d-1)})_+; \uFp) \ar[r]_{f^\bs} & \tilde{H}^{\frac{(p-1)(d-1)}{2} \xi}_{C_p}({\Conf_p(\bR^d)}_+; \uFp)}
\]
So, using module structure, we have $$f^\bs(a_{\frac{(p-1)(d-1)}{2}\xi}.1)=a_{\frac{(p-1)(d-1)}{2}\xi}f^*(1).$$  Lemma \ref{Cohsphere} and Remark \ref{cpgrad} together imply that $a_{\frac{(p-1)(d-1)}{2} \xi}.1=0.$ Therefore, this contradicts \eqref{acteg}.
\end{proof}

\begin{remark}
For $p=2$, the proof follows the same structure as the odd prime case, with only minor adjustments required in the cohomology computations. As these changes are straightforward, we omit the details here.  The prime-power case we investigate in a future work.
\end{remark}


\begin{thebibliography}{HHR16}

\bibitem{AAK18}
{\scshape Akopyan, Arseniy; Avvakumov, Sergey; Karasev, Roman.} 
``Convex fair partitions into an arbitrary number of pieces." 
arXiv:1804.03057.

\bibitem{Bar10}
{\scshape Bárány, Imre; Blagojević, Pavle; Szűcs, András.} 
``Equipartitioning by a convex 3-fan." 
\emph{Adv.~Math.} {\bf 223}.2 (2010): 579–593.

\bibitem{BG21}
{\scshape Basu, Samik; Ghosh, Surojit.} 
``Bredon cohomology of finite dimensional $C_p$-spaces." 
\emph{Homology Homotopy Appl.} {\bf 23} (2021), no. 2, 33–57.

\bibitem{BLZ}
{\scshape Blagojević, Pavle V. M.; Lück, Wolfgang; Ziegler, Günter M.} 
``Equivariant topology of configuration spaces." 
\emph{J. Topol.} {\bf 8} (2015), no. 2, 414–456.

\bibitem{BZ14}
{\scshape Blagojević, Pavle V. M.; Ziegler, Günter M.} 
``Convex equipartitions via equivariant obstruction theory." 
\emph{Israel J. Math.} {\bf 200} (2014), no. 1, 49–77.

\bibitem{FL04}
{\scshape Ferland, Kevin K.; Lewis, L. Gaunce, Jr.} 
``The $RO(G)$-graded equivariant ordinary homology of $G$-cell complexes with even-dimensional cells for $G=\Z/p$." 
\emph{Mem. Amer. Math. Soc.} {\bf 167} (2004), no. 794, viii+129 pp.

\bibitem{GM95}
{\scshape Greenlees, J. P. C.; May, J. P.} 
``Generalized Tate cohomology." 
\emph{Mem. Amer. Math. Soc.} {\bf 113} (1995), no. 543, viii+178 pp.

\bibitem{HHR16}
{\scshape Hill, Michael A.; Hopkins, Michael J.; Ravenel, Douglas C.} 
``On the nonexistence of elements of Kervaire invariant one." 
\emph{Ann. of Math.} (2) {\bf 184} (2016), no. 1, 1–262.

\bibitem{KHA14}
{\scshape Karasev, Roman; Hubard, Alfredo; Aronov, Boris.} 
``Convex equipartitions: the spicy chicken theorem." 
\emph{Geom. Dedicata} {\bf 170} (2014), 263–279.


\bibitem{MM02}
{\scshape Mandell, M. A.; May, J. P.} 
``Equivariant orthogonal spectra and $S$-modules." 
\emph{Mem. Amer. Math. Soc.} {\bf 159} (2002), no. 755, x+108 pp.

\bibitem{May96}
{\scshape May, Peter J.} 
``Equivariant homotopy and cohomology theory." 
Vol. 91 of CBMS Regional Conference Series in Mathematics, Published for the Conference Board of the Mathematical Sciences, Washington, DC; by the American Mathematical Society, Providence, RI, 1996. 
With contributions by M. Cole, G. Comezan\~a, S. Costenoble, A. D. Elmendorf, J. P. C. Greenlees, L. G. Lewis, Jr., R. J. Piacenza, G. Triantafillou, and S. Waner.

\bibitem{Nan12}
{\scshape Nandakumar, R.; Ramana Rao, N.} 
``Fair partitions of polygons: An elementary introduction." 
\emph{Proc. Indian Acad. Sci. Math. Sci.} {\bf 122}.3 (2012): 459–467.

\end{thebibliography}
\end{document}